\def\Box{\vcenter{\vbox{\hrule\hbox{\vrule
     \vbox to 8.8pt{\hbox to 10pt{}\vfill}\vrule}\hrule}}}
\newcommand{\Z}{{\mathbb Z}}
\newcommand{\F}{\mathbb{F}}
\newcommand{\la}{\lambda}
\newcommand{\Fp}{\F_p}
\newcommand{\Fq}{\F_q}
\newcommand{\NF}{\text{NF}}
\newcommand{\es}{\emptyset}
\newcommand{\lan}{\langle}
\newcommand{\ran}{\rangle}
\newtheorem{thm}{Theorem}[section]
\newtheorem{lemma}[thm]{Lemma}
\newtheorem{prop}[thm]{Proposition}
\newtheorem{res}[thm]{Result}
\newtheorem{rem}[thm]{Remark}
\newcommand\rwh[1]{\arraycolsep=0pt\relax%
\begin{array}{c}
\stretchto{
  \scaleto{
    \scalerel*[\widthof{\ensuremath{#1}}]{\kern-.5pt\bigwedge\kern-.5pt}
    {\rule[-\textheight/2]{1ex}{\textheight}} 
  }{\textheight} %
}{0.5ex}\\           
#1\\                 
\rule{-1ex}{0ex}
\end{array}
}
\DeclareDocumentCommand{\NF}{o m m}%
{%
\IfValueTF{#1}
 {\ensuremath{#1\textsc{-nf}(#2,#3)}\xspace}
 {\ensuremath{\textsc{nf}(#2,#3)}\xspace}%
}
\title{Cyclotomic construction of $\lambda$-fold near-factorizations of cyclic groups}
\author{
Shuxing Li\footnote{Department of Mathematical Sciences, University of Delaware, Newark, DE 19716, USA, email: {\tt shuxingl@udel.edu}} \quad
Koji Momihara\footnote{Division of Natural Science, Faculty of Advanced Science and Technology, Kumamoto University, 2-40-1 Kurokami, Kumamoto 860-8555, Japan, email: {\tt momihara@educ.kumamoto-u.ac.jp}} 
}
\begin{document}

\date{}
\maketitle

\begin{abstract}
The study of near-factorizations of finite groups dates back to the 1950s. Recently, this topic has attracted renewed attention, and the concept has been extended to $\lambda$-fold near-factorizations, in which each non-identity group element appears exactly $\lambda \ge 1$ times. This paper presents a cyclotomic construction of $\lambda$-fold near-factorizations in the cyclic group $\Fp$, where $p = 4n^4 + 12n^2 + 1$ is prime for $n \ge 1$.

\medskip
\noindent \textbf{Keywords.} $\lambda$-fold near-factorization, cyclotomic number, cyclotomy, strong external difference family

\medskip
\noindent {{\bf Mathematics Subject Classification\/}: 05E16, 05B10, 11T22, 94A13.}
\end{abstract}

\section{Introduction}
\label{sec-introduction}

Factoring a group into products of its subsets has been a vibrant area of research, with applications in combinatorics, coding theory, functional analysis, and number theory \cite{Sza,SS}. Let $G$ be a finite group, and let $S$ and $T$ be subsets of $G$. The pair $(S,T)$ is called a \emph{near-factorization of $G$} if every non-identity element $g \in G$ can be uniquely expressed as a product $g = st$, where $s \in S$ and $t \in T$. The study of near-factorizations of finite groups was initiated by de Bruijn in the 1950s \cite{DB} and has since developed into a long-standing line of research on both finite abelian and nonabelian groups \cite{BHS, CGHK, Pech03, Pech04}. Recently, there have been further advances in the study of near-factorizations \cite{KMS, KPS}. Notably, the theoretical and computational results in \cite{KMS} indicate that no nontrivial near-factorizations exist for finite abelian noncyclic groups of order less than 200. This observed sparsity of near-factorizations in finite abelian groups has motivated Kreher, Li, and Stinson \cite{KLS} to propose a natural generalization of the concept.

Let $S$ and $T$ be subsets of a finite group $G$, and let $\lambda$ be a positive integer. The pair $(S, T)$ is called a \emph{$\lambda$-fold near-factorization of $G$} if every non-identity element $g \in G$ can be expressed as a product $g = st$ with $s \in S$ and $t \in T$ in exactly $\lambda$ distinct ways. Clearly, a $1$-fold near-factorization is simply a near-factorization of $G$. Compared to near-factorizations, allowing each non-identity element of $G$ to be represented multiple times leads to much richer constructions for $\lambda$-fold near-factorizations in finite abelian groups. Please refer to \cite{KLS} for a comprehensive treatment.  

In this paper, we will use the standard group ring notation. For a gentle introduction to group rings, please refer to \cite[Section 1]{JL24}. For a finite group $G$ with identity $e$, using the group ring notation, $(S,T)$ is a $\lambda$-fold near-factorization of $G$, if
$$
ST= \lambda(G-e) \mbox{\quad in $\Z[G]$}.
$$
Furthermore, if $|S|=s$ and $|T| =t$, we say $(S,T)$ is an $(s,t)$-$\lambda$-fold near-factorization of $G$, denoted by \NF[(s,t)]{G}{\lambda}. Note that an \NF[(s,t)]{G}{\lambda} exists only if 
$$
st = \lambda(|G| -1).
$$
While the $\lambda$-fold near-factorization has only been proposed very recently \cite{KLS}, a special subfamily of $\lambda$-fold near-factorization has been studied in the context of coding theory and cryptography \cite{PS}. Let $D_1, D_2, \dots, D_m$ be mutually disjoint $k$-subsets of $G$, where $m \ge 2$, and let $\lambda$ be a positive integer. Then $\{D_1, D_2, \dots, D_m\}$ is a $(v, m, k, \la)$-\emph{strong external difference family} (SEDF) in $G$ if
$$
D_j \sum_{\substack{1 \le i \le m \\ i \ne j}} D_i^{(-1)}=\la(G-1) \quad \mbox{in $\Z[G]$ for each $j$ satisfying $1 \le j \le m$},
$$
where $D_i^{(-1)}$ is the group ring element corresponding to the subset $\{ x^{-1} \mid x \in D_i \}$. Note that an SEDF in $G$ consisting of exactly two subsets is equivalent to a $\lambda$-fold near-factorization of $G$. Specifically, $\{D_1, D_2\}$ is a $(v,k,2,\la)$-SEDF in $G$ if and only if $(D_1,D_2^{(-1)})$ is a $\NF[(k,k)]{G}{\lambda}$. SEDFs were proposed in \cite{PS} as a combinatorial counterpart of the optimal algebraic manipulation detection code \cite{CDF+}, which has applications in cryptography. For more results concerning SEDFs, please refer to \cite{BJWZ,HP,JL19,PS} and the references therein.

From the perspective of SEDFs, there have been several cyclotomic constructions generating $\lambda$-fold near-factorizations of the additive group of finite fields. Let $N>1$ be an integer. Let $q$ be a prime with $q \equiv 1 \pmod N$. Let $\alpha$ be a fixed primitive element of the finite field $\Fq$. Set $C_0^N=\lan \alpha^N \ran$ to be the unique index $N$ subgroup of the multiplicative cyclic group $\Fq^*$. For $0 \le i \le N-1$, define $C_i^N$ to the multiplicative coset $\alpha^i\lan \alpha^N \ran$. The cosets $\{C_i^N \mid 0 \le i \le N-1\}$ are \emph{cyclotomic cosets of order $N$ over $\Fq$}, which serve as building blocks of cyclotomic constructions. We note that cyclotomic cosets over $\Fq$ depends on the choice of the primitive element $\alpha$ of $\Fq$.

Specifically, $\lambda$-fold near-factorizations have been constructed using order $2$ \cite{CD,HP}, order $4$ \cite{BJWZ}, and order $6$ \cite{BJWZ} cyclotomic cosets. We summarize these results as follows.

\begin{res}
\label{res-prior}
\begin{itemize}
\item[(1)] {\rm (\cite[Proposition 29]{CD}, \cite[Theorem 5.4]{HP})} Let $q$ be a prime power with $q \equiv 1 \pmod4$. Then $(C_0^2,C_1^2)$ is a $\NF[(\frac{q-1}{2},\frac{q-1}{2})]{\Fq}{\frac{q-1}{4}}$.  
\item[(2)] {\rm (\cite[Theorem 4.3]{BJWZ})} Let $p$ be a prime with $p=16n^2+1$ for $n \ge 1$. Then $(C_0^4,C_2^4)$ is a $\NF[(\frac{p-1}{4},\frac{p-1}{4})]{\Fp}{\frac{p-1}{16}}$.  
\item[(3)] {\rm (\cite[Theorem 4.6]{BJWZ})} Let $p$ be a prime with $p=108n^2+1$ for $n \ge 1$. Then $(C_0^6,C_3^6)$ is a $\NF[(\frac{p-1}{6},\frac{p-1}{6})]{\Fp}{\frac{p-1}{36}}$.  
\end{itemize}
\end{res}

\begin{rem}
Result~\ref{res-prior}(2) describes the construction presented in \cite[Theorem 4.3]{BJWZ}, where $\NF[(\frac{q-1}{4},\frac{q-1}{4})]{\Fq}{\frac{q-1}{16}}$ was obtained for each prime power $q$ with $q=16n^2+1$ for $n \ge 1$. Note that if $q=16n^2+1$ is a prime power, then $q$ must be a prime because Catalan's conjecture is known to hold~\cite{Mih}.
\end{rem}

In this paper, we utilize order $8$ cyclotomic cosets to construct $\lambda$-fold near-factorizations of cyclic groups with prime order. Our construction can be summarized as follows.

\begin{thm}
\label{thm-main}
Let $u$ be a nonzero square such that $p=4u^{2}+12u+1$ is a prime. Then depending on the choice of primitive element of $\Fp$, one of the following holds true.
\begin{itemize}
\item[(1)] Both $(C_{0}^{8} \cup C_{1}^{8},C_{4}^{8} \cup C_{5}^{8})$ and $(C_{0}^{8} \cup C_{7}^{8},C_{3}^{8} \cup C_{4}^{8})$ are $(\frac{p-1}{4}, \frac{p-1}{4})$-$\NF{\Fp}{\frac{p-1}{16}}$.
\item[(2)] Both $(C_{0}^{8} \cup C_{5}^{8},C_{1}^{8} \cup C_{4}^{8})$ and $(C_{0}^{8} \cup C_{3}^{8},C_{4}^{8} \cup C_{7}^{8})$ are $(\frac{p-1}{4}, \frac{p-1}{4})$-$\NF{\Fp}{\frac{p-1}{16}}$.
\end{itemize}
\end{thm}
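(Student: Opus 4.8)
The plan is to transform the group-ring identity $ST=\lambda(G-e)$ in $\Z[\Fp]$ into a finite collection of identities among the cyclotomic numbers of order $8$, and then to verify these by substituting the known closed forms for order-$8$ cyclotomic numbers specialized to $p=4u^2+12u+1$. First I would record the structural facts that make the reduction possible. Since $p-1=4u(u+3)$ is divisible by $8$, the order-$8$ cosets are defined; moreover, for $u$ a perfect square one checks directly that $p\equiv1\pmod{16}$, so $16\mid p-1$ and hence $-1=\alpha^{(p-1)/2}\in C_0^8$. Consequently every coset is symmetric, $-C_i^8=C_i^8$. Each of the four pairs has the common shape $(S,\alpha^4 S)$ with $S=C_0^8\cup C_j^8$ for an odd index $j\in\{1,3,5,7\}$, so that $T=C_4^8\cup C_{j+4}^8$; as the index sets of $S$ and $T$ are disjoint and the cosets are symmetric, $0$ admits no representation $s+t$, and since $|S||T|=\frac{(p-1)^2}{16}=\lambda(p-1)$ the parameters are consistent. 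It then remains to control the nonzero elements.

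Second, I would carry out the cyclotomic-number reduction. Writing $(a,b)_8=|\{z\in C_a^8: z+1\in C_b^8\}|$ for the cyclotomic numbers of order $8$, the symmetry $-C_i^8=C_i^8$ gives, for $g\in C_k^8$, that the number of pairs $(x,y)\in C_i^8\times C_{i'}^8$ with $x+y=g$ equals $(i-k,\,i'-k)_8$. Summing over the two cosets of $S$ and the two cosets of $T$, the number of representations of $g\in C_k^8$ as $s+t$ is
$$R_k=(-k,4-k)_8+(-k,j+4-k)_8+(j-k,4-k)_8+(j-k,j+4-k)_8,$$
with all indices read modulo $8$. The theorem is thus equivalent to the assertion that $R_k=\frac{p-1}{16}$ for every $k\in\{0,1,\dots,7\}$ and each relevant $j$.

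Third, I would evaluate $R_k$. The hypothesis that $u$ is a square is exactly what supplies the representation $p=c^2+2d^2$ with $c=2u+1$ and $d=2\sqrt{u}$; together with $p\equiv1\pmod{16}$ this singles out the branch of the order-$8$ cyclotomic-number evaluations appropriate to $p$ and fixes the relevant residuacity data. Substituting the corresponding closed forms (expressed through $p$, the quadratic-partition parameters, and sign choices tied to the chosen primitive element $\alpha$) into the four-term sums $R_k$, one checks that the special shape of $p$ forces massive cancellation, leaving $R_k=\frac{p-1}{16}$ in every case. The residual sign ambiguities encode the dependence of the cyclotomic cosets on $\alpha$; they split into two families, and these produce the two alternatives (1) and (2) in the statement.

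The main obstacle is this final evaluation. The order-$8$ cyclotomic numbers form a large table—sixty-four entries collapsing by symmetry to a handful of distinct values, each carrying delicate sign conventions—and the difficulty is to (i) normalize $c,d$ (and the $a^2+b^2$ parameters) consistently relative to $\alpha$, (ii) show that the identity $p=4u^2+12u+1$ simplifies every four-term sum $R_k$ to $\frac{p-1}{16}$ simultaneously for all eight values of $k$, and (iii) verify that the remaining sign freedom partitions precisely into cases (1) and (2). Maintaining these sign conventions coherently across all $k$ and all four choices of $j$, rather than in any single instance, is the crux of the argument.
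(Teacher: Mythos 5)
Your overall strategy coincides with the paper's: reduce the group-ring identity $ST=\lambda(\Fp-0)$ to the statement that, for each coset $C_k^8$, a sum of four order-$8$ cyclotomic numbers equals $\frac{p-1}{16}$, and then evaluate that sum via the known closed forms for order-$8$ cyclotomic numbers specialized to $p=4u^2+12u+1$. Your reduction is correct and matches Lemma~\ref{lem-cyceight}: indeed $p\equiv 1\pmod{16}$, so $-1\in C_0^8$, the element $0$ receives no representations, the parameters are consistent, and your expression $R_k=(-k,4-k)+(-k,j+4-k)+(j-k,4-k)+(j-k,j+4-k)$ agrees with the paper's $v_\ell$, $x_\ell$, $y_\ell$, $z_\ell$ after the standard symmetries $(i,j)=(j,i)=(-i,j-i)$.

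However, the argument stops exactly where the real work begins, and you say so yourself (``the main obstacle is this final evaluation''). The claim that ``the special shape of $p$ forces massive cancellation, leaving $R_k=\frac{p-1}{16}$ in every case'' \emph{is} the theorem, and it is nowhere verified. What actually drives the cancellation --- and what your outline does not isolate --- are the two identities $x+a=\pm2$ and $y=\pm b$, coming from the simultaneous representations $p=(2u-1)^2+4\bigl(2u^{\frac{1}{2}}\bigr)^2=(2u+1)^2+2\bigl(2u^{\frac{1}{2}}\bigr)^2$ normalized by $x\equiv a\equiv 1\pmod 4$ (Lemma~\ref{lem-primitive}); each four-term sum collapses to $4p-4$ plus multiples of $(x+a\mp2)$ and $(y\mp b)$, and the vanishing of those corrections is precisely these identities. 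Moreover, Lehmer's table is not a single branch: it splits according to whether $2\in C_0^4$ or $2\in C_2^4$, which for these primes is governed by the parity of $u$ via the criterion $8\mid 2y$, and both columns must be checked; your remark that $p\equiv1\pmod{16}$ ``singles out the branch'' overlooks this case division. Finally, the dichotomy between alternatives (1) and (2) is governed by whether the chosen primitive element gives $y=b$ or $y=-b$; you correctly sense this, but determining which sign goes with which pair of constructions requires actually running the computation in both regimes, as in the proof of Theorem~\ref{thm-ordereight}. As it stands the proposal is a correct plan with the decisive verification missing.
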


\begin{rem}
The constructions in Theorem~\ref{thm-main} and Result~\ref{res-prior}(2) generate $\lambda$-fold near-factorizations in distinct cyclic groups $\Fp$, except for $p=17$.  
\end{rem}

We remark that all previous cyclotomic constructions recorded in Result~\ref{res-prior} satisfy that both subsets of the $\lambda$-fold near-factorizations are single cyclotomic cosets. In contrast, Theorem~\ref{thm-main} generates $\lambda$-fold near-factorizations where both subsets are formed by unions of cyclotomic cosets.

The rest of this paper is organized as follows. In Section~\ref{sec-prelim}, we introduce necessary background concerning cyclotomic numbers. Our main construction is presented in Section~\ref{sec-construction}. Section~\ref{sec-conclusion} concludes the paper.

\section{Preliminaries}
\label{sec-prelim}

Let $N>1$ be an integer. Let $p$ be a prime with $p \equiv 1 \pmod N$. Let $\alpha$ be a fixed primitive element of $\Fp$. Set $f=\frac{p-1}{N}$. For $0 \le i, j \le N-1$, define the \emph{order $N$ cyclotomic number} $(i, j)_{N,\alpha}$ to be the number of $(u, v)$ pairs satisfying $0 \leq u, v \leq f-1$ and 
$$
1+\alpha^{Nu+i}=\alpha^{Nv+j}.
$$
Note that the order $N$ cyclotomic numbers depend on the prime $p$ and the choice of primitive element $\alpha$. To describe the cyclotomic numbers of order 8, which will serve as a key tool in our construction, we first need the following lemma.
\begin{lemma}
\label{lem-Diophantine}
Let $p$ be a prime with $p \equiv 1\pmod8$. Then the following holds true.
\begin{itemize}
\item[(1)] Up to the sign of $y$, there is exactly one pair of integers $(x, y)$ such that $p=x^{2}+4y^{2}$ and $x \equiv 1 \pmod4$.
\item[(2)] Up to the sign of $b$, there is exactly one pair of integers $(a, b)$ such that $p=a^{2}+2b^{2}$ and $a \equiv 1\pmod4$.
\end{itemize}
\end{lemma}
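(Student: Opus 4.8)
The plan is to recognize both parts as classical statements about the representation of the prime $p$ by the binary quadratic forms $x^2+4y^2$ and $a^2+2b^2$, and to establish each by combining a standard existence result with a parity analysis that pins down which variable is odd, followed by a sign normalization dictated by the congruence modulo $4$. The uniqueness in each case will come from unique factorization in the relevant quadratic ring: $\Z[i]$ for part (1) and $\Z[\sqrt{-2}]$ for part (2). In both rings the group of units is finite and explicit ($\{\pm 1,\pm i\}$ and $\{\pm 1\}$ respectively), and it is exactly this unit group that controls the ambiguity in the representation.

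For part (1), since $p\equiv 1\pmod 8$ we have in particular $p\equiv 1\pmod 4$, so $p$ is a sum of two squares, say $p=s^2+t^2$. Because $p$ is odd, exactly one of $s,t$ is odd and the other even; write the even one as $2y$ and keep the odd one as $s$, giving $p=s^2+4y^2$. Since $s$ is odd it is congruent to $1$ or $3$ modulo $4$, and replacing $s$ by $-s$ switches these, so there is a unique choice of sign making $x:=\pm s\equiv 1\pmod 4$; this yields $p=x^2+4y^2$ with $x\equiv 1\pmod 4$. For uniqueness, I would invoke the essential uniqueness of the sum-of-two-squares representation of a prime (a consequence of unique factorization in $\Z[i]$, where $p$ splits into a pair of conjugate Gaussian primes determined up to units): the multiset $\{|s|,|t|\}$ is determined by $p$, hence so are the odd part $|x|$ and the even part $2|y|$. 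The congruence $x\equiv 1\pmod 4$ then fixes $x$ uniquely, leaving only the sign of $y$ free.

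For part (2), the hypothesis $p\equiv 1\pmod 8$ gives $\left(\tfrac{-2}{p}\right)=1$, so $p$ is represented by $a^2+2b^2$; equivalently $p$ splits in the Euclidean (hence principal, hence unique factorization) domain $\Z[\sqrt{-2}]$ as $p=(a+b\sqrt{-2})(a-b\sqrt{-2})$. A quick parity check shows $a$ must be odd: if $a$ were even then $a^2+2b^2$ would be even, contradicting that $p$ is odd. As before, choosing the sign of $a$ makes $a\equiv 1\pmod 4$ uniquely. Uniqueness up to the sign of $b$ then follows from unique factorization together with the fact that the only units of $\Z[\sqrt{-2}]$ are $\pm 1$: the conjugate factors $a\pm b\sqrt{-2}$ are determined up to sign, so $|a|$ and $|b|$ are fixed by $p$, the congruence fixes $a$, and only the sign of $b$ remains.

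I expect the main technical care to lie in the uniqueness bookkeeping rather than in existence. In part (1) the Gaussian units $\pm i$ interchange the two coordinates of a representation $s^2+t^2$, so the argument must separate the odd coordinate from the even one (via the parity observation) before the congruence can be used to remove the sign ambiguity; this is what guarantees that $x$ is genuinely unique and not merely unique up to swapping with $2y$. Part (2) is cleaner because $\Z[\sqrt{-2}]$ has no units beyond $\pm 1$ and the two coordinates play asymmetric roles in $a^2+2b^2$, so only the sign normalization is needed. Throughout, I would be explicit that it is the congruence modulo $4$ (not merely modulo $2$) that selects a single representative among the two signs of the odd variable.
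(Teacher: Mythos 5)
Your proposal is correct, and its overall skeleton matches the paper's: establish that $p$ has an essentially unique representation by the relevant form with the odd variable positive, then use the congruence $x\equiv 1\pmod 4$ (resp.\ $a\equiv 1\pmod 4$) to remove the remaining sign ambiguity in the odd variable, leaving only the sign of the even one free. The difference is in where the core representation facts come from. The paper simply cites the literature (Berndt--Evans--Williams for "at most one" and Cox for "at least one" positive representation by $x^2+4y^2$, and analogously for $a^2+2b^2$), whereas you derive them: for part (1) you pass through the two-square representation $p=s^2+t^2$ in $\Z[i]$ and use the parity of the coordinates to extract the form $x^2+4y^2$, and for part (2) you use the splitting of $p$ in the unique factorization domain $\Z[\sqrt{-2}]$ together with the fact that its only units are $\pm 1$. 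Your route is more self-contained and makes visible exactly which unit-group ambiguities survive (the $\pm i$ swap in $\Z[i]$ being killed by parity, only $\pm 1$ in $\Z[\sqrt{-2}]$); the paper's route is shorter at the cost of outsourcing the number theory. One small remark: your part (1) only uses $p\equiv 1\pmod 4$, which is consistent with the paper's (stronger) hypothesis and is not an error.
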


\begin{proof} 
According to \cite[Lemma 3.0.1]{BEW} and \cite[Chapter 1, Section 1]{Cox}, there exist at most and at least one pair of positive integers $\left(x_{0}, y_{0}\right)$, respectively, such that $p=x_{0}^{2}+4y_{0}^{2}$. Therefore, there exists a unique pair of positive integers $\left(x_{0}, y_{0}\right)$ such that $p=x_{0}^{2}+4 y_{0}^{2}$. Clearly, $x_{0}$ is odd. Set $x= \pm x_{0}$ and $y= \pm y_{0}$. The condition $x \equiv 1\pmod4$ determines the sign of $x$. Thus, there is a unique pair $(x, y)$, up to the sign of $y$, such that $p=x^{2}+4y^{2}$. This completes the proof of part (1). Part (2) can be proved analogously.
\end{proof} 

Now we are ready to describe cyclotomic numbers of order $8$ over $\Fp$ for prime $p \equiv 1 \pmod{16}$. We will follow the treatment of Lehmer \cite{Lehmer}.  For simplicity, from now on, we use $(i,j)$ to denote the order $8$ cyclotomic number $(i,j)_{8,\alpha}$ with respect to the primitive element $\alpha$.

\begin{lemma}[\rm{\cite[Appendix]{Lehmer}}]
\label{lem-ordereight}
Let $p$ be a prime with $p \equiv 1\pmod{16}$. Then $2 \in C_0^2$. 
\begin{itemize}
\item Suppose up to the sign of $y$, $(x,y)$ is the unique pair of integers such that $p=x^{2}+4y^{2}$ and $x \equiv 1\pmod4$. 
\item Suppose up to the sign of $b$, $(a, b)$ is the unique pair of integers such that $p=a^{2}+2b^{2}$ and $a \equiv 1 \pmod4$. 
\end{itemize}
Let $\alpha$ be a primitive element of $\Fp$ and $(i, j)$, $0 \le i, j \le 7$, be the order $8$ cyclotomic numbers depending on $p$ and $\alpha$. Then the following holds true.
\begin{itemize}
\item[(1)] Depending on $2 \in C_0^4$ or $2 \in C_2^4$, Table~\ref{tab-fifteen} lists fifteen critical order $8$ cyclotomic numbers $(i,j)$. 

\begin{table}[h!]
\centering
\caption{Fifteen critical order $8$ cyclotomic numbers $(i,j)$}
\begin{tabular}{|l|l|l|}
\hline
$64(i,j)$   & \multicolumn{1}{c|}{if $2 \in C_0^4$} &  \multicolumn{1}{c|}{if $2 \in C_2^4$} \\
\hline
$64(0,0)$ & $p-23-18 x-24 a$ & $p-23+6 x$ \\
\hline
$64(0,1)$ & $p-7+2 x+4 a+16 y+16 b$ & $p-7+2 x+4 a$ \\
\hline
$64(0,2)$ & $p-7+6 x+16 y$ & $p-7-2 x-8 a-16 y$ \\
\hline
$64(0,3)$ & $p-7+2 x+4 a-16 y+16 b$ & $p-7+2 x+4 a$ \\
\hline
$64(0,4)$ & $p-7-2 x+8 a$ & $p-7-10 x$ \\
\hline
$64(0,5)$ & $p-7+2 x+4 a+16 y-16 b$ & $p-7+2 x+4 a$ \\
\hline
$64(0,6)$ & $p-7+6 x-16 y$ & $p-7-2 x-8 a+16 y$ \\
\hline
$64(0,7)$ & $p-7+2 x+4 a-16 y-16 b$ & $p-7+2 x+4 a$ \\
\hline
$64(1,2)$ & $p+1+2 x-4 a$ & $p+1-6 x+4 a$ \\
\hline
$64(1,3)$ & $p+1-6 x+4 a$ & $p+1+2 x-4 a-16 b$ \\
\hline
$64(1,4)$ & $p+1+2 x-4 a$ & $p+1+2 x-4 a+16 y$ \\
\hline
$64(1,5)$ & $p+1+2 x-4 a$ & $p+1+2 x-4 a-16 y$ \\
\hline
$64(1,6)$ & $p+1-6 x+4 a$ & $p+1+2 x-4 a+16 b$ \\
\hline
$64(2,4)$ & $p+1-2 x$ & $p+1+6 x+8 a$ \\
\hline
$64(2,5)$ & $p+1+2 x-4 a$ & $p+1-6 x+4 a$ \\
\hline
\end{tabular}
\label{tab-fifteen}
\end{table}

\item[(2)] Table~\ref{tab-relation} describes the relation among order $8$ cyclotomic numbers. As a consequence, up to the sign of $y$ and $b$, the full set of order $8$ cyclotomic numbers $(i, j)$, $0 \leq i, j \leq 7$, are determined.

\begin{table}[h!]
\centering
\caption{Relation among order $8$ cyclotomic numbers $(i,j)$}
\begin{tabular}{|c|c|c|c|c|c|c|c|c|}
\hline
\diagbox[dir=SE,width=3em]{$i$}{$j$} & $0$ & $1$ & $2$ & $3$ & $4$ & $5$ & $6$ & $7$ \\
\hline
$0$ & $(0,0)$ & $(0,1)$ & $(0,2)$ & $(0,3)$ & $(0,4)$ & $(0,5)$ & $(0,6)$ & $(0,7)$ \\
\hline
$1$ & $(0,1)$ & $(0,7)$ & $(1,2)$ & $(1,3)$ & $(1,4)$ & $(1,5)$ & $(1,6)$ & $(1,2)$ \\
\hline
$2$ & $(0,2)$ & $(1,2)$ & $(0,6)$ & $(1,6)$ & $(2,4)$ & $(2,5)$ & $(2,4)$ & $(1,3)$ \\
\hline
$3$ & $(0,3)$ & $(1,3)$ & $(1,6)$ & $(0,5)$ & $(1,5)$ & $(2,5)$ & $(2,5)$ & $(1,4)$ \\
\hline
$4$ & $(0,4)$ & $(1,4)$ & $(2,4)$ & $(1,5)$ & $(0,4)$ & $(1,4)$ & $(2,4)$ & $(1,5)$ \\
\hline
$5$ & $(0,5)$ & $(1,5)$ & $(2,5)$ & $(2,5)$ & $(1,4)$ & $(0,3)$ & $(1,3)$ & $(1,6)$ \\
\hline
$6$ & $(0,6)$ & $(1,6)$ & $(2,4)$ & $(2,5)$ & $(2,4)$ & $(1,3)$ & $(0,2)$ & $(1,2)$ \\
\hline
$7$ & $(0,7)$ & $(1,2)$ & $(1,3)$ & $(1,4)$ & $(1,5)$ & $(1,6)$ & $(1,2)$ & $(0,1)$ \\
\hline
\end{tabular}
\label{tab-relation}
\end{table}
\end{itemize}
\end{lemma}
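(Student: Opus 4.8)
The plan is to reduce the statement to the classical theory of octic Gauss and Jacobi sums over $\Fp$, handling the two auxiliary facts and the symmetry relations by hand and invoking the closed-form Jacobi sum evaluations for the explicit table. First the two opening observations. Since $p \equiv 1 \pmod{16}$ forces $p \equiv 1 \pmod 8$, the second supplement to quadratic reciprocity gives $\left(\tfrac{2}{p}\right) = 1$, so $2 \in C_0^2$. The existence and uniqueness, up to the sign of $y$ and of $b$, of the partitions $p = x^2 + 4y^2$ with $x \equiv 1 \pmod 4$ and $p = a^2 + 2b^2$ with $a \equiv 1 \pmod 4$ are exactly Lemma~\ref{lem-Diophantine}, so the four parameters $x,y,a,b$ entering Table~\ref{tab-fifteen} are already well defined with the stated sign ambiguities.

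For part (2) I would realize $(i,j)$ as the number of $z \in C_i^8$ with $z+1 \in C_j^8$ and exhibit two bijections of this solution set. The inversion $z \mapsto z^{-1}$ sends such a $z$ to $z^{-1} \in C_{8-i}^8$ with $z^{-1}+1 = (z+1)z^{-1} \in C_{j-i}^8$, proving $(i,j) = (8-i,\,j-i)$ with indices read modulo $8$. Because $f = (p-1)/8$ is even precisely when $p \equiv 1 \pmod{16}$ (this is where the full hypothesis is used), we have $-1 \in C_0^8$; then the involution $z \mapsto -1-z$ sends $z \in C_i^8$ with $z+1 \in C_j^8$ to $-1-z \in C_j^8$ with $(-1-z)+1 = -z \in C_i^8$, proving $(i,j) = (j,i)$. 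Closing the $64$ pairs under these two symmetries together with periodicity modulo $8$ produces the orbit structure recorded in Table~\ref{tab-relation} and collapses everything to the fifteen representatives of part (1); verifying the table is then a finite, mechanical check that each pair is carried to its listed representative.

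The substantive work is part (1). I would fix the multiplicative character $\chi$ of order $8$ normalized by $\chi(\alpha) = \zeta_8$, form the Gauss sums $G_s = \sum_{t \in \Fp^{*}} \chi^s(t)\,\zp^{t}$, and use the finite Fourier inversion
\[
(i,j) = \frac{1}{64}\sum_{s,t=0}^{7} \zeta_8^{-(si+tj)}\, T(s,t), \qquad T(s,t) = \sum_{z \in \Fp} \chi^s(z)\,\chi^t(z+1),
\]
where $T(s,t)$ is, for $s,t,s+t \not\equiv 0 \pmod 8$, an octic Jacobi sum $G_sG_t/G_{s+t}$, and otherwise contributes an explicit elementary correction. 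The heart of the computation is the evaluation of these octic Jacobi sums in $\Z[\zeta_8]$: their quadratic and quartic components are pinned down by the partitions $p = x^2+4y^2$ and $p = a^2+2b^2$ through the classical determinations of the quartic and octic Jacobi sums, and the outcome splits according to whether $2 \in C_0^4$ or $2 \in C_2^4$, since the relevant octic Gauss sum factor depends on the quartic residue class of $2$. Substituting the evaluated sums into the inversion formula and collecting the coefficients of $1,x,y,a,b$ yields the two columns of Table~\ref{tab-fifteen}.

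The main obstacle I anticipate is exactly this last evaluation: fixing the signs and ramified factors of the octic Jacobi sums so that they are consistent with the single normalization $\chi(\alpha)=\zeta_8$, and then tracking coherently how the two sign ambiguities (of $y$ and of $b$) and the $2 \in C_0^4$ versus $2 \in C_2^4$ dichotomy propagate through all fifteen entries. Since this determination is lengthy and was carried out in full by Lehmer, I would present the derivations of $2 \in C_0^2$ and of part (2) in detail and invoke \cite{Lehmer} for the closed-form octic Jacobi sum evaluations underlying Table~\ref{tab-fifteen}.
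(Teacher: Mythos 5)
Your proposal is correct and, at bottom, rests on the same foundation as the paper: the paper's entire proof of this lemma consists of the Euler-criterion computation $\left(\tfrac{2}{p}\right)=(-1)^{(p^2-1)/8}=1$, an appeal to Lemma~\ref{lem-Diophantine} for the uniqueness of $(x,y)$ and $(a,b)$, and the single sentence that parts (1) and (2) ``follow exactly from'' Lehmer's appendix. Where you differ is that you actually derive part (2) rather than citing it: the two involutions $z\mapsto z^{-1}$ and $z\mapsto -1-z$ give $(i,j)=(8-i,\,j-i)$ and $(i,j)=(j,i)$ (the latter using $-1\in C_0^8$, which is exactly where $p\equiv 1\pmod{16}$ enters), and closing the $64$ pairs under these relations does reproduce Table~\ref{tab-relation} with its fifteen representatives. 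This is a worthwhile addition --- note that the relation $(i,j)=(8-i,\,j-i)$ is re-derived and used later in the proof of Lemma~\ref{lem-cyceight}(2), so your argument makes the logical dependence explicit rather than leaving it buried in the citation. For part (1) your Gauss/Jacobi-sum outline is the right mechanism and is what Lehmer carries out, but like the paper you ultimately defer the sign normalizations and the $2\in C_0^4$ versus $2\in C_2^4$ case split to \cite{Lehmer}; so for the table itself the two proofs are the same citation dressed differently, and your honest flagging of the normalization issue as the genuine difficulty is accurate.
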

\begin{proof}
Since $p \equiv 1\pmod{16}$, $\left( \frac{2}{p} \right)=(-1)^{\frac{p^2-1}{8}}=1$ and $2 \in C_0^2$. The uniqueness of pairs $(x, y)$ and $(a, b)$ follows from Lemma~\ref{lem-Diophantine}. Parts (1) and (2) follows exactly from \cite[Appendix]{Lehmer}.
\end{proof}

Now, we apply Lemma~\ref{lem-ordereight} to the specific prime numbers that are involved in our construction.

\begin{lemma}
\label{lem-primitive}
Let $u$ be a nonzero square such that $p=4u^{2}+12u+1$ is a prime. Let $\alpha$ be a primitive element of $\Fp$.
\begin{itemize}
\item[(1)] Suppose $u$ is even. Then $2 \in C_{0}^{4}$. Moreover,
\begin{itemize}
\item[(1a)] There exists a unique pair of integers $(x,y)=(-2u+1, \pm2u^{\frac{1}{2}})$, up to the sign of $y$, such that $p=x^{2}+4y^{2}$.
\item[(1b)] There exists a unique pair of integers $(a,b)=(2u+1, \pm2u^{\frac{1}{2}})$, up to the sign of $b$, such that $p=a^{2}+2b^{2}$.
\end{itemize}
\item[(2)] Suppose $u$ is odd. Then $2 \in C_{2}^{4}$. Moreover,
\begin{itemize}
\item[(2a)] There exists a unique pair of integers $(x,y)=(2u-1, \pm2u^{\frac{1}{2}})$, up to the sign of $y$, such that $p=x^{2}+4y^{2}$.
\item[(2b)] There exists a unique pair of integers $(a,b)=(-2u-1, \pm2u^{\frac{1}{2}})$, up to the sign of $b$, such that $p=a^{2}+2b^{2}$.
\end{itemize}
\end{itemize}
\end{lemma}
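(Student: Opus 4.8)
The plan is to dispose of the three assertions appearing in each case---the two Diophantine representations and the quartic residuacity of $2$---separately, reducing each to a classical fact. Throughout I would write $s=u^{1/2}$, which is a positive integer since $u$ is a nonzero square, so that $u=s^2$ and the proposed $y$- and $b$-values are exactly $\pm 2s$. With this substitution $p=4s^4+12s^2+1$.

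First I would record the congruence $p\equiv 1\pmod{16}$, which is what licenses the use of Lemma~\ref{lem-Diophantine} and of the biquadratic character criterion below, and which also guarantees $2\in C_0^2$. This is a direct parity computation: when $s$ is even, both $4s^4$ and $12s^2$ are divisible by $16$; when $s$ is odd, $s^2\equiv 1\pmod 8$ yields $4s^4\equiv 4$ and $12s^2\equiv 12\pmod{16}$. In either case $p\equiv 1\pmod{16}$, so in particular $p\equiv 1\pmod 8$ and $2$ is a quadratic residue.

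For the representations, I would simply verify the algebraic identities, e.g.\ $(-2u+1)^2+4(2s)^2=4u^2+12u+1=p$ and $(2u+1)^2+2(2s)^2=p$ in the even case (and the analogues with $x=2u-1$, $a=-2u-1$ in the odd case). Uniqueness then comes for free from Lemma~\ref{lem-Diophantine}: it produces a unique pair (up to the sign of $y$, resp.\ $b$) once normalized by $x\equiv 1\pmod 4$, resp.\ $a\equiv 1\pmod 4$, so it suffices to confirm these congruences for the exhibited $x$ and $a$. A one-line parity check does this---for $u$ even, $2u\equiv 0\pmod 4$ gives $-2u+1\equiv 2u+1\equiv 1$; for $u$ odd, $2u\equiv 2\pmod 4$ gives $2u-1\equiv 1$ and $-2u-1\equiv 1\pmod 4$---so the displayed pairs are precisely the unique ones.

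The crux, and where I expect the genuine content to lie, is deciding between $2\in C_0^4$ and $2\in C_2^4$, i.e.\ determining the biquadratic character of $2$. Since $p\equiv 1\pmod 8$ we already know $2\in C_0^2$, so the only question is whether $2$ is a fourth power in $\Fp^*$. Here I would invoke the classical criterion of Gauss (see \cite{BEW}): for a prime $p\equiv 1\pmod 8$ written as $p=x^2+4y^2$ with $x$ odd, $2$ is a fourth power in $\Fp^*$ if and only if $4\mid y$. With $y=\pm 2s$ this condition reads $4\mid 2s$, i.e.\ $s$ even, i.e.\ $u$ even. Hence $u$ even forces $2\in C_0^4$ and $u$ odd forces $2\in C_2^4$, exactly matching the two cases. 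Note that the membership $2\in C_0^4$ (resp.\ $C_2^4$) simply records that $2$ is (resp.\ is not) a fourth power, an intrinsic property of $\Fp^*$ independent of the choice of primitive element $\alpha$, consistent with the statement. The main obstacle is therefore only locating and correctly normalizing the biquadratic residuacity criterion; the remaining steps are routine identity-checking and parity bookkeeping.
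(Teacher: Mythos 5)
Your proposal is correct and follows essentially the same route as the paper: uniqueness of the representations via Lemma~\ref{lem-Diophantine}, and the biquadratic residuacity of $2$ via the classical Gauss criterion (cited in the paper as \cite[Theorem 7.5.1]{BEW} in the form $8\mid 2y$, equivalent to your $4\mid y$). Your write-up merely makes explicit some routine verifications the paper leaves implicit (the identities $(-2u+1)^2+4(2\sqrt{u})^2=p$ and $(2u+1)^2+2(2\sqrt{u})^2=p$, the normalizations $x\equiv a\equiv 1\pmod 4$, and the congruence $p\equiv 1\pmod{16}$), all of which check out.
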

\begin{proof}
The uniqueness of pairs of integers in parts (1) and (2) follows from Lemma~\ref{lem-Diophantine}. In both parts, since $p \equiv 1 \pmod {16}$, by \cite[Theorem 7.5.1]{BEW}, $2 \in C_{0}^{4}$ if and only if $8 \mid 2y$. Since $y= \pm 2 u^{\frac{1}{2}}$, then 
$$
2 \in \begin{cases}
                  C_{0}^{4} & \mbox{if $u$ even,} \\ 
                  C_{2}^{4} & \mbox{if $u$ odd.}
       \end{cases}
$$
\end{proof}

\begin{rem}
\label{rem-primitive}
Let $U_p$ be the set of all primitive elements of $\Fp$. Fix $\alpha \in U_p$, according to Lemma~\ref{lem-primitive}, $x$ and $a$ are determined by $u$, whereas $y$ and $b$ depend on both $u$ and $\alpha$. Thus, we can write $y$ and $b$ as $y_{\alpha}$ and $b_{\alpha}$.  By Lemma~\ref{lem-primitive}, $y_{\alpha}=\pm b_{\alpha}$.

Define $U_p^+=\{ \alpha \in U_p \mid y_{\alpha}=b_{\alpha} \}$ and $U_p^-=\{ \alpha \in U_p \mid y_{\alpha}=-b_{\alpha} \}$. We will show that both $U_p^+$ and $U_p^-$ are nonempty. Since $U_p^+ \cup U_p^-=U_p \ne \es$, without loss of generality, we can assume $U_p^+ \ne \es$. Choose $\alpha \in U_p^+$. Then $y_{\alpha}=b_{\alpha}$.  Set $e=\gcd(8,f)$. Consider the set $\{ 8x+5 \mid 0 \le x \le f-1 \}$. When regarded as modulo $\frac{f}{e}$, the set $\{ 8x+5 \mid 0 \le x \le f-1 \}$ covers each element of $\Z_{\frac{f}{e}}$ exactly $e$ times. Therefore, there exists some $8y+5 \in \{ 8x+5 \mid 0 \le x \le f-1 \}$ such that $\gcd(8y+5,p-1)=1$. Thus, $\alpha^{8y+5} \in U_p$. Note that for $0 \le i,j \le 7$, $(i,j)_{8,\alpha}=(5i,5j)_{8,\alpha^{8y+5}}$. Consequently, $(0,2)_{\alpha}=(0,2)_{\alpha^{8y+5}}$
and $(0,1)_{\alpha}=(0,5)_{\alpha^{8y+5}}$. By Table~\ref{tab-fifteen}, these two equations imply that $y_{\alpha}=y_{\alpha^{8y+5}}$ and $b_{\alpha}=-b_{\alpha^{8y+5}}$. Hence, $y_{\alpha^{8y+5}}=-b_{\alpha^{8y+5}}$ and $\alpha^{8y+5} \in U_p^-$. Thus, both $U_p^+$ and $U_p^-$ are nonempty. Moreover, $U_p^-=\{ \alpha^{8y+5} \mid \alpha \in U_p^+ \}$.
\end{rem}

\section{Main construction}
\label{sec-construction}

In this section, we present our main construction. For simplicity, when writing the order $8$ cyclotomic cosets $C_{\ell}^8$ and the order $8$ cyclotomic numbers $(i,j)$, we always assume that the entries $\ell$, $i$, $j$ are regarded modulo $8$. 

\begin{lemma}
\label{lem-cyceight}
Let $p$ be a prime with $p \equiv 1 \pmod{16}$. 
\begin{itemize}
\item[(1)] For $0 \le i,j \le 7$ with $i \ne j$, $C_i^8C_j^8=\sum_{\ell=0}^7 (j-i,\ell-i) C_{\ell}^8$. 
\item[(2)]Let $S=C_{0}^{8}+C_{1}^{8}$ and $T=C_{4}^{8}+C_{5}^{8}$. Then $ST=\sum_{\ell=0}^{7} v_{\ell} C_{\ell}^8$, where $v_{\ell}=(4,\ell)+(5,\ell)+(3,\ell-1)+(4,\ell-1)$. Moreover, $v_{\ell}=v_{\ell+4}$ for $0 \le \ell \le 3$.
\item[(3)]Let $S=C_{0}^{8}+C_{7}^{8}$ and $T=C_{3}^{8}+C_{4}^{8}$. Then $ST=\sum_{\ell=0}^{7} x_{\ell} C_{\ell}^8$, where $x_{\ell}=(3,\ell)+(4,\ell)+(4,\ell+1)+(5,\ell+1)$. Moreover, $x_{\ell}=x_{\ell+4}$ for $0 \le \ell \le 3$.
\item[(4)]Let $S=C_{0}^{8}+C_{5}^{8}$ and $T=C_{1}^{8}+C_{4}^{8}$. Then $ST=\sum_{\ell=0}^{7} y_{\ell} C_{\ell}^8$, where $y_{\ell}=(1,\ell)+(4,\ell)+(4,\ell+3)+(7,\ell+3)$. Moreover, $y_{\ell}=y_{\ell+4}$ for $0 \le \ell \le 3$.
\item[(5)]Let $S=C_{0}^{8}+C_{3}^{8}$ and $T=C_{4}^{8}+C_{7}^{8}$. Then $ST=\sum_{\ell=0}^{7} z_{\ell} C_{\ell}^8$, where $z_{\ell}=(4,\ell)+(7,\ell)+(1,\ell-3)+(4,\ell-3)$. Moreover, $z_{\ell}=z_{\ell+4}$ for $0 \le \ell \le 3$.
\end{itemize}
\end{lemma}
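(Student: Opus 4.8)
The plan is to prove (1) as a general group-ring identity over the additive group of $\Fp$, and then obtain (2)--(5) by expanding the products and collecting coefficients, handling the period-$4$ statements by a single symmetry observation.

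For (1), I would read $C_i^8 C_j^8 = \sum_{x \in C_i^8,\, y \in C_j^8}[x+y]$ as a multiset of sums in $(\Fp,+)$ and compute the coefficient of a representative $\alpha^\ell \in C_\ell^8$. Since $p \equiv 1 \pmod{16}$, the index $f = \frac{p-1}{8}$ is even, so $-1 = \alpha^{4f} \in C_0^8$; hence for $i \ne j$ the equation $x+y=0$ would force $y = -x \in C_i^8$, contradicting $y \in C_j^8$, so no identity term appears and $C_i^8 C_j^8$ is supported on the cosets. To read off the coefficient of $\alpha^\ell$, I would write $x = \alpha^{i+8u}$ and factor $\alpha^\ell - x = \alpha^{i+8u}(\alpha^{\ell-i-8u}-1)$; since $\alpha^{i+8u}\in C_i^8$, the condition $\alpha^\ell - x \in C_j^8$ becomes $\alpha^{\ell-i-8u}-1 \in C_{j-i}^8$. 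As $u$ runs over $0,\dots,f-1$, the element $w=\alpha^{\ell-i-8u}$ runs once over $C_{\ell-i}^8$, so the coefficient equals $\#\{w \in C_{\ell-i}^8 : w-1 \in C_{j-i}^8\}$. Using $-1 \in C_0^8$ to replace $w-1$ by $1-w$ and setting $c=-w$, this count is the cyclotomic number $(\ell-i,\,j-i)$, which equals $(j-i,\,\ell-i)$ by the symmetry $(s,t)=(t,s)$ valid for $f$ even (reflected in the symmetry of Table~\ref{tab-relation}). This yields the stated formula.

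Granting (1), parts (2)--(5) are direct expansions. For (2), $ST = C_0^8 C_4^8 + C_0^8 C_5^8 + C_1^8 C_4^8 + C_1^8 C_5^8$, and applying (1) to each factor (reading off $(j-i,\ell-i)$ with indices modulo $8$) gives $v_\ell = (4,\ell)+(5,\ell)+(3,\ell-1)+(4,\ell-1)$; parts (3)--(5) are identical in spirit, the only subtlety being the reduction of negative indices modulo $8$ (for instance $C_7^8 C_3^8$ contributes $(-4,\ell+1)=(4,\ell+1)$). I expect this bookkeeping, together with pinning down the exact index convention in (1), to be the only place where care is genuinely required.

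For the period-$4$ assertions I would avoid case-by-case table manipulation and use a uniform automorphism argument. In each of (2)--(5) one checks directly that $\alpha^4 S = T$ and $\alpha^4 T = S$ (e.g.\ in (2), $\alpha^4(C_0^8+C_1^8)=C_4^8+C_5^8$ and $\alpha^4(C_4^8+C_5^8)=C_0^8+C_1^8$). Let $\phi: z \mapsto \alpha^4 z$ be the corresponding bijection of $\Fp$, extended linearly to $\Z[\Fp]$. Then $\phi(ST) = \sum_{s,t}[\alpha^4 s + \alpha^4 t] = (\alpha^4 S)(\alpha^4 T) = TS = ST$ by commutativity, while $\phi(C_\ell^8) = C_{\ell+4}^8$. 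Comparing coefficients in $\phi\big(\sum_\ell v_\ell C_\ell^8\big) = \sum_\ell v_\ell C_{\ell+4}^8 = \sum_\ell v_\ell C_\ell^8$ gives $v_\ell = v_{\ell+4}$, and the same argument applies verbatim to $x_\ell$, $y_\ell$, $z_\ell$. Thus the main obstacle is not conceptual but notational: fixing the indexing in (1) so that the four expansions land exactly on the claimed coefficient expressions.
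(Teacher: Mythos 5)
Your proposal is correct, and it splits naturally into a part that mirrors the paper and a part that genuinely departs from it. For (1), your computation is essentially the paper's: both arguments use $-1\in C_0^8$ to rule out the zero element and reduce the coefficient to a cyclotomic count; the only difference is that your normalization lands on $(\ell-i,j-i)$ and you then invoke the symmetry $(s,t)=(t,s)$ (valid here because $-1\in C_0^8$), whereas the paper translates by $C_0^8$ so as to count $b\in C_{j-i}^8$ with $1+b\in C_{\ell-i}^8$ and reads off $(j-i,\ell-i)$ directly from the definition, with no extra symmetry needed. The expansions in (2)--(5) are the same routine bookkeeping in both treatments, and your sample reduction $C_7^8C_3^8\mapsto(-4,\ell+1)=(4,\ell+1)$ is consistent with the stated coefficients. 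Where you truly diverge is the periodicity $v_\ell=v_{\ell+4}$ (and its analogues): the paper first derives the identity $(i,j)=(8-i,j-i)$ from $1+\alpha^{8u+i}=\alpha^{8v+j}\Leftrightarrow 1+\alpha^{-8u-i}=\alpha^{8(v-u)+j-i}$ and then matches the four terms of $v_{\ell+4}$ with those of $v_\ell$ one by one, repeating this for each of (2)--(5); you instead observe that multiplication by $\alpha^4$ is an automorphism of $(\Fp,+)$ that swaps $S$ and $T$ in each case and shifts $C_\ell^8$ to $C_{\ell+4}^8$, so comparing coefficients in $\phi(ST)=TS=ST$ gives all four periodicity claims at once. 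Your automorphism argument is uniform and arguably more conceptual; the paper's term-by-term identity is more elementary but must be rechecked in each case. Both are valid, and I see no gap.
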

\begin{proof}
(1) Note that $p \equiv 1 \pmod{16}$ implies $-1 \in C_{0}^8$. Then for each $0 \le i \le 7$, $a \in C_i^8$ if and only if $-a \in C_i^8$. Thus, for $0 \leq i, j \leq 7$ with $i \neq j$, $C_{i}^8C_{j}^8$ does not contain the zero element of $\Fp$. For $h \in \Fp$, we use $[C_i^8C_j^8]_h$ to denote the coefficient of $h$ in the group ring element $C_i^8C_j^8$. For $a \in C_i^8$, $b \in C_j^8$, $h \in C_{\ell}^8$, we have $a+b=h$ if and only if $ac+bc=hc$ for each $c \in C_0^8$. Consequently, $[C_i^8C_j^8]_h$ remains a constant as $h$ ranges over $C_{\ell}^8$. Therefore, for $i \ne j$, we can write $C_{i}^8 C_{j}^8=\sum_{\ell=0}^{7} u_{\ell} C_{\ell}^8$. Set $f=\frac{p-1}{8}$. Then
\begin{align*}
u_{\ell}|C_{\ell}^8|=fu_{\ell}=&|\{(a,b) \mid a \in C_i^8, b \in C_j^8, a+b \in C_{\ell}^8 \}| \\
                                          =&|\{(a,b) \mid a \in C_0^8, b \in C_{j-i}^8, a+b \in C_{\ell-i}^8 \}| \\
                                          =&f|\{(1,b) \mid b \in C_{j-i}^8, a+b \in C_{\ell-i}^8 \}| \\
                                          =&f(j-i,\ell-i)
\end{align*}
Thus, $u_{\ell}=(j-i,\ell-i)$ and $C_i^8C_j^8=\sum_{\ell=0}^7 (j-i,\ell-i) C_{\ell}^8$. 

\noindent (2) Applying part (1), we have
\begin{align*}
ST =&(C_{0}^8+C_{1}^8)(C_{4}^8+C_{5}^8) \\
      =&\sum_{\substack{i \in\{1,2 \} \\j \in\{4, 5\}}} \sum_{\ell=0}^{7}(j-i,\ell-i) C_{\ell}^8 \\
      =&\sum_{\ell=0}^{7}\Big(\sum_{\substack{i \in\{0,1\} \\ j \in\{4, 5\}}}(j-i, \ell-i)\Big) C_{\ell}^8 \\
      =&\sum_{\ell=0}^{7}((4,\ell)+(5,\ell)+(3,\ell-1)+(4,\ell-1)) C_{\ell}^8 \\
      =&\sum_{\ell=0}^{7} v_{\ell} C_{\ell}^8
\end{align*}
Note that 
\begin{align*}
(i, j)=&|\{ (u,v) \mid 0 \le u, v \le f-1,1+\alpha^{8u+i}=\alpha^{8v+j} \}| \\
      =&|\{(u,v) \mid 0 \le u, v \le f-1,1+\alpha^{-8u-i}=\alpha^{8(v-u)+j-i}\}| \\
      =&(8-i,j-i)
\end{align*}
Then for each $0 \le \ell \le 3$,
\begin{align*}
v_{\ell+4}=&(4,\ell+4)+(5,\ell+4)+(3,\ell+3)+(4,\ell+3) \\
               =&(4,\ell)+(3,\ell-1)+(5,\ell)+(4,\ell-1) \\
               =&v_{\ell}
\end{align*}
Parts (3), (4), (5) can be proved similarly as part (2).
\end{proof}

Now we are ready to present the main construction, which establishes Theorem~\ref{thm-main}.

\begin{thm}
\label{thm-ordereight}
Let $u$ be a nonzero square such that $p=4u^{2}+12u+1$ is a prime. Let $\alpha$ be a primitive element of $\Fp$. For $0 \le i \le 7$. define $C_{i}^{8}=\alpha^{i}\left\langle\alpha^{8}\right\rangle$.
\begin{itemize}
\item[(1)] If $\alpha \in U_p^+$, then the following holds.
\begin{itemize}
\item[(1a)] Define $S=C_{0}^{8}+C_{1}^{8}$ and $T=C_{4}^{8}+C_{5}^{8}$. Then $(S,T)$ is a $(\frac{p-1}{4}, \frac{p-1}{4})$-$\NF{\Fp}{\frac{p-1}{16}}$.
\item[(1b)] Define $S=C_{0}^{8}+C_{7}^{8}$ and $T=C_{3}^{8}+C_{4}^{8}$. Then $(S,T)$ is a $(\frac{p-1}{4}, \frac{p-1}{4})$-$\NF{\Fp}{\frac{p-1}{16}}$.
\end{itemize}
\item[(2)] If $\alpha \in U_p^-$, then the following holds.
\begin{itemize}
\item[(2a)] Define $S=C_{0}^{8}+C_{5}^{8}$ and $T=C_{1}^{8}+C_{4}^{8}$. Then $(S,T)$ is a $(\frac{p-1}{4}, \frac{p-1}{4})$-$\NF{\Fp}{\frac{p-1}{16}}$.
\item[(2b)] Define $S=C_{0}^{8}+C_{3}^{8}$ and $T=C_{4}^{8}+C_{7}^{8}$. Then $(S,T)$ is a $(\frac{p-1}{4}, \frac{p-1}{4})$-$\NF{\Fp}{\frac{p-1}{16}}$.
\end{itemize}
\end{itemize}
\end{thm}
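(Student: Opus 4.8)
The plan is to reduce the near-factorization condition to a finite check on order-$8$ cyclotomic numbers, carry that check out for a single configuration by direct substitution, and then obtain the remaining three for free by changing the primitive element.

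First, observe that in the additive group $(\Fp,+)$ the identity is $0$, so $\Fp - 0 = \sum_{\ell=0}^{7} C_\ell^8$ in $\Z[\Fp]$. Hence $(S,T)$ is a $\NF{\Fp}{\frac{p-1}{16}}$ precisely when $ST = \frac{p-1}{16}\sum_{\ell=0}^{7} C_\ell^8$, i.e. when every coefficient in the expansion of $ST$ from Lemma~\ref{lem-cyceight} equals $\lambda = \frac{p-1}{16}$. The size count $|S|=|T|=\frac{p-1}{4}$ and $\frac{p-1}{4}\cdot\frac{p-1}{4} = \lambda(p-1)$ is immediate. By the relations $v_\ell = v_{\ell+4}$ already established in Lemma~\ref{lem-cyceight} (and their analogues for the other three configurations), it suffices to verify the coefficients indexed by $\ell \in \{0,1,2,3\}$.

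Next I would prove part (1a) directly. Each $v_\ell = (4,\ell)+(5,\ell)+(3,\ell-1)+(4,\ell-1)$ is a sum of four cyclotomic numbers; using Table~\ref{tab-relation} I rewrite each as one of the fifteen critical numbers, then substitute the closed forms of Table~\ref{tab-fifteen} together with the explicit parameters from Lemma~\ref{lem-primitive}. This splits into the cases $u$ even ($2\in C_0^4$, with $x=-2u+1$, $a=2u+1$) and $u$ odd ($2\in C_2^4$, with $x=2u-1$, $a=-2u-1$), since the two columns of Table~\ref{tab-fifteen} differ. In each case I expect $64v_\ell$ to collapse to $4p-4$: the terms carrying $16y$ and $16b$ cancel once the hypothesis $\alpha\in U_p^+$, i.e. $y=b$, is invoked, while the surviving $x$- and $a$-contributions sum to a constant that, using $x+a=\pm2$, converts the leading $4p$ together with the constant into exactly $4p-4$. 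This yields $v_\ell=\frac{p-1}{16}$ for each $\ell$, establishing (1a). For instance, $v_0$ reduces to $(0,4)+(0,5)+(1,4)+(1,5)$, and when $u$ is even a one-line substitution gives $64v_0 = 4p-12+4x+4a+16y-16b = 4p-4$.

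Finally I would deduce (1b), (2a) and (2b) without further computation, exploiting that being a near-factorization is a property of $S$ and $T$ as subsets of $(\Fp,+)$, independent of which primitive element labels the cosets. Replacing $\alpha$ by $\alpha^{-1}$ sends $C_i^8$ to $C_{-i}^8$ (index multiplication by $-1$) and, by Table~\ref{tab-fifteen}, flips the signs of both $y$ and $b$, hence preserves $U_p^+$; under this relabeling the subsets of (1b) become exactly those of (1a), so (1b) follows. Replacing $\alpha$ by $\alpha^{8y+5}$ as in Remark~\ref{rem-primitive} realizes index multiplication by $5$ and carries $U_p^+$ onto $U_p^-$; a short index calculation shows that the subsets of (2a) (respectively (2b)) then coincide with those of (1a) (respectively (1b)), giving both parts of case (2). \textbf{The main obstacle} is the bookkeeping in the direct verification of (1a): correctly reducing each of the sixteen cyclotomic numbers through Table~\ref{tab-relation}, and tracking the signs of $x,y,a,b$ across both parity cases so that the $16y$–$16b$ cancellation and the $x+a$ simplification come out precisely right.
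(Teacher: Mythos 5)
Your proof of case (1a) is exactly the paper's: reduce to showing $64v_\ell=4p-4$ for $0\le\ell\le3$ via Lemma~\ref{lem-cyceight}(2), rewrite each $(4,\ell),(5,\ell),(3,\ell-1),(4,\ell-1)$ through Table~\ref{tab-relation}, substitute Table~\ref{tab-fifteen}, and close using $x+a=\pm2$ and $y=b$; your sample computation of $64v_0$ matches the paper's line for line. Where you genuinely diverge is in the remaining three cases: the paper simply declares (1b), (2a), (2b) ``analogous'' (i.e.\ repeat the sixteen-term substitution three more times), whereas you derive them from (1a) by changing the primitive element --- $\alpha\mapsto\alpha^{-1}$ negates coset indices and, as one checks from the pairs $(0,2)\leftrightarrow(0,6)$ and $(0,1)\leftrightarrow(0,7)$ in Table~\ref{tab-fifteen}, flips the signs of both $y$ and $b$, so it preserves $U_p^+$ and carries the subsets of (1b) onto those of (1a); and the index-multiplication-by-$5$ map of Remark~\ref{rem-primitive} carries $U_p^\pm$ onto $U_p^\mp$ and the subsets of (2a), (2b) onto those of (1a), (1b). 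This is sound, since the near-factorization property depends only on $S$ and $T$ as subsets, and it buys a real economy: one computation instead of four. (An even shorter route to (1b) and (2b): multiplication by $\alpha$ is an additive automorphism fixing $\Fp\setminus\{0\}$ and shifting every coset index by $1$, which sends the pair in (1b) directly to the pair in (1a) and the pair in (2b) to that in (2a).) The one point you should make fully explicit is that every element of $U_p^-$ arises as $\gamma^{m}$ with $\gamma\in U_p^+$ and $m\equiv5\pmod 8$, which is exactly the final assertion $U_p^-=\{\alpha^{8y+5}\mid\alpha\in U_p^+\}$ of Remark~\ref{rem-primitive}; with that in hand your deduction of case (2) is complete.
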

\begin{proof}
We only prove case (1a) as the proof of cases (1b), (2a), (2b) is analogous.

By Lemma~\ref{lem-cyceight}(2), $ST=\sum_{\ell=0}^{7} v_{\ell} C_{\ell}$, where $v_{\ell}=(4,\ell)+(5,\ell)+(3,\ell-1)+(4,\ell-1)$ and it suffices to show that $64v_{\ell}=4p-4$ for $0 \le \ell \le 3$. Below, we will handle the two cases $2 \in C_0^4$ and $2 \in C_2^4$ separately.

If $2 \in C_{0}^{4}$, then by Lemma~\ref{lem-primitive}(1), we have $x+a=2$. Since $\alpha \in U_p^+$, we have $y=b$. Employing Lemma~\ref{lem-ordereight}, we compute $64 v_{\ell}$, $0 \le \ell \le 3$, as follows.
\begin{align*}
64v_{0}=&64((4,0)+(5,0)+(3,7)+(4,7))=64((0,4)+(0,5)+(1,4)+(1,5)) \\
            =&(p-7-2x+8a)+(p-7+2x+4a+16y-16b)+2(p+1+2x-4a) \\
            =&4p-12+4x+4a+16y-16b=4p-4+4(-2+x+a)+16(y-b) \\
            =&4p-4\\
64v_{1}=&64((4,1)+(5,1)+(3,0)+(4,0))=64((1,4)+(1,5)+(0,3)+(0,4)) \\
            =&2(p+1+2x-4a)+(p-7+2x+4a-16y+16b)+(p-7-2x+8a) \\
            =&4p-12+4x+4a-16y+16b=4p-4+4(-2+x+a)+16(b-y) \\
            =&4p-4 \\
64v_{2}=&64((4,2)+(5,2)+(3,1)+(4,1))=64((2,4)+(2,5)+(1,3)+(1,4))\\
            =&(p+1-2x)+(p+1+2x-4a)+(p+1-6x+4a)+(p+1+2x-4a) \\
            =&4p+4-4x-4a=4p-4+4(2-x-a) \\
            =&4p-4 \\
64v_{3}=&64((4,3)+(5,3)+(3,2)+(4,2))=64((1,5)+(2,5)+(1,6)+(2,4)) \\
            =&2(p+1+2x-4a)+(p+1-6x+4a)+(p+1-2x) \\
            =&4p+4-4x-4a=4p-4+4(2-x-a) \\
            =&4p-4
\end{align*}

If $2 \in C_{2}^{4}$, then by Lemma~\ref{lem-primitive}(2), we have $x+a=-2$. Since $\alpha \in U_p^+$, we have $y=b$. Employing Lemma~\ref{lem-ordereight}, we compute $64 v_{\ell}$, $0 \le \ell \le 3$, as follows.
\begin{align*}
64v_{0}=&64((4,0)+(5,0)+(3,7)+(4,7))=64((0,4)+(0,5)+(1,4)+(1,5)) \\
            =&(p-7-10x)+(p-7+2x+4a)+(p+1+2x-4a+16y)\\
              &+(p+1+2x-4a-16y) \\
            =&4p-12-4x-4a=4p-4-4(2+x+a) \\
            =&4p-4 \\
64v_{1}=&64((4,1)+(5,1)+(3,0)+(4,0))=64((1,4)+(1,5)+(0,3)+(0,4)) \\
            =&(p+1+2x-4a+16y)+(p+1+2x-4a-16y) \\
              &+(p-7+2x+4a)+(p-7-10x) \\
            =&4p-12-4x-4a=4p-4-4(2+x+a) \\
            =&4p-4
\end{align*}            
\begin{align*}            
64v_{2}=&64((4,2)+(5,2)+(3,1)+(4,1))=64((2,4)+(2,5)+(1,3)+(1,4)) \\
            =&(p+1+6x+8a)+(p+1-6x+4a)\\
              &+(p+1+2x-4a-16b)+(p+1+2x-4a+16y) \\
            =&4p+4+4x+4a+16y-16b=4p-4+4(2+x+a)+16(y-b) \\
            =&4p-4 \\
64v_{3}=&64((4,3)+(5,3)+(3,2)+(4,2))=64((1,5)+(2,5)+(1,6)+(2,4)) \\
            =&(p+1+2x-4a-16y)+(p+1-6x+4a) \\
              &+(p+1+2x-4a+16b)+(p+1+6x+8a) \\
            =&4p+4+4x+4a-16y+16b=4p-4+4(2+x+a)-16(y-b) \\
            =&4p-4
\end{align*}
Consequently, the case (1a) is complete.
\end{proof}

Note that for the first $10^8$ nonzero squares $u$, the sequence 
$$
(4u^{2}+12u+1)_{\mbox{\scriptsize $1 \le u \le (10^8)^2$, $u$ square}}
$$ 
contains $7565563$ primes. Therefore, a natural question is, does the sequence $(4u^{2}+12u+1)_{\mbox{\scriptsize $u \ge 1$, $u$ square}}$ contain infinitely many primes? If so, Theorem~\ref{thm-ordereight} leads to an infinite family of $\lambda$-fold near-factorizations. The following lemma provides an affirmative answer assuming the Bunyakovsky's conjecture \cite[Conjecture 2.3]{Conrad} holds true.

\begin{prop}
Assuming the Bunyakovsky's conjecture is true, then the sequence $(4n^{4}+12n^{2}+1)_{n \ge 1}$ contains infinitely many primes.
\end{prop}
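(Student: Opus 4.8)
The plan is to invoke Bunyakovsky's conjecture directly for the single-variable polynomial $g(x) = 4x^{4} + 12x^{2} + 1 \in \Z[x]$, whose values at $x = n$ are exactly the terms of the sequence in question. Bunyakovsky's conjecture asserts that a polynomial over $\Z$ takes infinitely many prime values provided it (i) has positive leading coefficient, (ii) is irreducible over $\Q$, and (iii) has no fixed prime divisor, i.e. $\gcd_{n \ge 1} g(n) = 1$. Condition (i) is immediate since the leading coefficient is $4 > 0$, so the whole argument reduces to verifying (ii) and (iii).

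For irreducibility, I would first observe that $g(x) \ge 1 > 0$ for every real $x$, so $g$ has no real root and hence no rational root; this rules out any factorization involving a linear (equivalently, cubic) factor. It then remains to exclude a factorization into two quadratics. Since $g$ is primitive (the gcd of its coefficients is $1$), Gauss's lemma lets me assume such a factorization has integer coefficients, and matching leading coefficients leaves only the shapes $(2x^{2} + ax + b)(2x^{2} + cx + d)$ and $(4x^{2} + ax + b)(x^{2} + cx + d)$ up to sign. Comparing the coefficients of $x^{3}, x^{2}, x, x^{0}$ in each shape produces a small system; the vanishing of the $x^{3}$- and $x$-coefficients forces relations (such as $c = -a$ and $a(d-b) = 0$) which, combined with $bd = 1$ and the $x^{2}$-coefficient equation, admit no rational solution. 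This finite case check is the only genuinely computational part of the proof, and it is where I expect the main (though modest) effort to lie.

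For the fixed-divisor condition (iii), I would simply exhibit two values with gcd $1$. Since $g(1) = 4 + 12 + 1 = 17$ is prime, any prime dividing all values would have to equal $17$; but $g(2) = 64 + 48 + 1 = 113$ is not divisible by $17$, so no prime divides every $g(n)$ and $\gcd_{n \ge 1} g(n) = 1$. With (i)--(iii) established, Bunyakovsky's conjecture immediately yields infinitely many primes among the $g(n)$, completing the proof.
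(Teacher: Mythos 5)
Your proposal is correct and follows essentially the same route as the paper: verify the hypotheses of Bunyakovsky's conjecture (positive leading coefficient, irreducibility, no fixed prime divisor via $\gcd(g(1),g(2))=\gcd(17,113)=1$) and then invoke it. In fact you supply more detail than the paper on the irreducibility of $4x^4+12x^2+1$ (which the paper merely asserts), and your sketched case check does go through: writing $b+d=\pm 2$ from $bd=1$ forces $a^2<0$ in the $2\times 2$ shape and $a=c=0$ with constant term $\pm 5\ne 12$ in the $4\times 1$ shape.
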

\begin{proof}
Consider the polynomial $f(x)=4x^{4}+12x^{2}+1 \in \Z[x]$. Note that $f(x)$ satisfies the following:
\begin{itemize}
\item[(1)] the leading coefficient is positive
\item[(2)] $f(x)$ is irreducible over $\Z$ 
\item[(3)] the coefficients of $f(x)$ have greatest common divisor $1$
\item[(4)] $\gcd(f(1),f(2))=\gcd(17,113)=1$, which implies for each prime $p$, $f(n) \ne 0 \pmod p$ for some $n \in \Z_{p}$. 
\end{itemize}
Then Bunyakovsky's conjecture \cite[Conjecture 2.3]{Conrad} implies that $(f(n))_{n \ge 1}=(4n^{4}+12n^{2}+1)_{n \ge 1}$ contains infinitely many primes.
\end{proof}

\section{Conclusion}
\label{sec-conclusion}

In this paper, we present a cyclotomic construction of $\lambda$-fold near-factorizations in cyclic group $\Fp$ where $p=4n^{4}+12n^{2}+1$ is a prime for $n \ge 1$. Unlike previous cyclotomic constructions, which produce $\lambda$-fold near-factorizations with both subsets consisting of single cyclotomic cosets, our construction allows each subset to be formed as a union of cyclotomic cosets. Assuming Bunyakovsky’s conjecture holds, this yields an infinite family of $\lambda$-fold near-factorizations in cyclic groups. Our result highlights the potential of using unions of cyclotomic cosets in the construction of  $\lambda$-fold near-factorizations.


\begin{thebibliography}{10}

\bibitem{BHS}
G.~Bacs\'{o}, L.~H\'{e}thelyi, and P.~Sziklai.
\newblock New near-factorizations of finite groups.
\newblock {\em Studia Sci. Math. Hungar.}, 45(4):493--510, 2008.

\bibitem{BJWZ}
J.~Bao, L.~Ji, R.~Wei, and Y.~Zhang.
\newblock New existence and nonexistence results for strong external difference
  families.
\newblock {\em Discrete Math.}, 341(6):1798--1805, 2018.

\bibitem{BEW}
B.~C. Berndt, R.~J. Evans, and K.~S. Williams.
\newblock {\em Gauss and {J}acobi sums}.
\newblock John Wiley \& Sons, Inc., New York, 1998.

\bibitem{CD}
Y.~Chang and C.~Ding.
\newblock Constructions of external difference families and disjoint difference
  families.
\newblock {\em Des. Codes Cryptogr.}, 40(2):167--185, 2006.

\bibitem{Conrad}
K.~Conrad.
\newblock Pattern in primes.
\newblock expository note available at Keith Conrad's homepage:
  kconrad.math.uconn.edu/blurbs/ugradnumthy/prime-patterns-1.pdf.

\bibitem{Cox}
D.~A. Cox.
\newblock {\em Primes of the form {$x^2 + ny^2$}}.
\newblock John Wiley \& Sons, Inc., New York, 1989.

\bibitem{CDF+}
R.~Cramer, Y.~Dodis, S.~Fehr, C.~Padr{\'o}, and D.~Wichs.
\newblock Detection of algebraic manipulation with applications to robust
  secret sharing and fuzzy extractors.
\newblock In {\em Advances in Cryptology---{EUROCRYPT} 2008}, volume 4965 of
  {\em Lecture Notes in Comput. Sci.}, pages 471--488. Springer, Berlin, 2008.

\bibitem{DB}
N.~G. de~Bruijn.
\newblock On number systems.
\newblock {\em Nieuw Arch. Wisk. (3)}, 4:15--17, 1956.

\bibitem{CGHK}
D.~de~Caen, D.~A. Gregory, I.~G. Hughes, and D.~L. Kreher.
\newblock Near-factors of finite groups.
\newblock {\em Ars Combin.}, 29:53--63, 1990.

\bibitem{HP}
S.~Huczynska and M.~B. Paterson.
\newblock Existence and non-existence results for strong external difference
  families.
\newblock {\em Discrete Math.}, 341(1):87--95, 2018.

\bibitem{JL19}
J.~Jedwab and S.~Li.
\newblock Construction and nonexistence of strong external difference families.
\newblock {\em J. Algebraic Combin.}, 49(1):21--48, 2019.

\bibitem{JL24}
J.~Jedwab and S.~Li.
\newblock Group rings and character sums: tricks of the trade.
\newblock In {\em New advances in designs, codes and cryptography}, volume~86
  of {\em Fields Inst. Commun.}, pages 241--266. Springer, Cham, 2024.

\bibitem{KLS}
D.~L. Kreher, S.~Li, and D.~R. Stinson.
\newblock $\lambda$-fold near-factorizations of groups.
\newblock {\em arXiv preprint arXiv:2503.09325v2}, 2025.

\bibitem{KMS}
D.~L. Kreher, W.~J. Martin, and D.~R. Stinson.
\newblock Uniqueness and explicit computation of mates in near-factorizations.
\newblock {\em arXiv preprint arXiv:2411.15890}, 2024.

\bibitem{KPS}
D.~L. Kreher, M.~B. Paterson, and D.~R. Stinson.
\newblock Near-factorizations of dihedral groups.
\newblock {\em arXiv preprint arXiv:2411.15884}, 2024.

\bibitem{Lehmer}
E.~Lehmer.
\newblock On the number of solutions of {$u^k+D\equiv w^2(\mod p)$}.
\newblock {\em Pacific J. Math.}, 5:103--118, 1955.

\bibitem{Mih}
P.~Mih\u{a}ilescu.
\newblock Primary cyclotomic units and a proof of {C}atalan's conjecture.
\newblock {\em J. Reine Angew. Math.}, 572:167--195, 2004.

\bibitem{PS}
M.~B. Paterson and D.~R. Stinson.
\newblock Combinatorial characterizations of algebraic manipulation detection
  codes involving generalized difference families.
\newblock {\em Discrete Math.}, 339(12):2891--2906, 2016.

\bibitem{Pech03}
A.~P\^{e}cher.
\newblock Partitionable graphs arising from near-factorizations of finite
  groups.
\newblock {\em Discrete Math.}, 269(1-3):191--218, 2003.

\bibitem{Pech04}
A.~P\^{e}cher.
\newblock Cayley partitionable graphs and near-factorizations of finite groups.
\newblock {\em Discrete Math.}, 276(1-3):295--311, 2004.

\bibitem{Sza}
S.~Szab\'{o}.
\newblock {\em Topics in factorization of abelian groups}.
\newblock Birkh\"{a}user Verlag, Basel, 2004.

\bibitem{SS}
S.~Szab\'{o} and A.~D. Sands.
\newblock {\em Factoring groups into subsets}, volume 257 of {\em Lecture Notes
  in Pure and Applied Mathematics}.
\newblock CRC Press, Boca Raton, FL, 2009.

\end{thebibliography}
\end{document}